\newtheorem{theorem}{Theorem}
\newtheorem{proposition}{Proposition}
\newtheorem{lemma}{Lemma}
\newtheorem{remark}{Remark}
\newenvironment{proof}[1][Proof]{\noindent\textbf{#1.} }{\ \rule{0.5em}{0.5em}}
\begin{document}

\title{When is an ellipse inscribed in a quadrilateral tangent at the
midpoint of two or more sides ?}
\author{Alan Horwitz}
\date{3/25/17}
\maketitle

\begin{abstract}
In \cite{CL} it was shown that if there is a midpoint ellipse(an ellipse
inscribed in a quadrilateral, $Q$, which is tangent at the midpoints of all
four sides of $Q)$, then $Q$ must be a parallelogram. We strengthen this
result by showing that if $Q$\ is not a parallelogram, then there is no
ellipse inscribed in $Q$ which is tangent at the midpoint of three sides of $%
Q$. Second, the only quadrilaterals which have inscribed ellipses tangent at
the midpoint of even two sides of $Q$ are trapezoids or what we call a
midpoint diagonal quadrilateral(the intersection point of the diagonals of $%
Q $ coincides with the midpoint of at least one of the diagonals of $Q$).
\end{abstract}

\section{Introduction}

Among all ellipses inscribed in a triangle, $T$, the midpoint, or Steiner,
ellipse is interesting and well--known(see \cite{D}). It is the unique
ellipse tangent to $T$\ at the midpoints of all three sides of $T$ and is
also the unique ellipse of maximal area inscribed in $T$. What about
ellipses inscribed in quadrilaterals, $Q$ ? Not surprisingly, perhaps, there
is not always a midpoint ellipse--i.e., an ellipse inscribed in $Q$ which is
tangent at the midpoints of all four sides of $Q$; In fact, in \cite{CL} it
was shown that if there is a midpoint ellipse, then $Q$ must be a
parallelogram. That is, if $Q$\ is not a parallelogram, then there is no
ellipse inscribed in $Q$ which is tangent at the midpoint of all four sides
of $Q$; But can one do better than four sides of $Q$ ? In other words, if $Q$%
\ is not a parallelogram, is there an ellipse inscribed in $Q$ which is
tangent at the midpoint of three sides of $Q$ ? In Theorem \ref{T1} we prove
that the answer is no. In fact, unless $Q$ is a trapezoid(a quadrilateral
with at least one pair of parallel sides), or what we call a midpoint
diagonal quadrilateral(see the definition below), then there is not even an
ellipse inscribed in $Q$ which is tangent at the midpoint of two sides of $Q$%
(see Lemmas \ref{L1} and \ref{L2}) .

\begin{definition}
A convex quadrilateral, $Q$, is called a midpoint diagonal
quadrilateral(mdq) if the intersection point of the diagonals of $Q$
coincides with the midpoint of at least one of the diagonals of $Q$. 
\end{definition}

A parallelogram, p, is a special case of an mdq since the diagonals of p\
bisect one another. In \cite{H4} we discussed mdq's as a generalization of
parallelograms in a certain sense related to tangency chords and conjugate
diameters of inscribed ellipses. We note here that there are two types of
mdq's: Type 1, where the diagonals intersect at the midpoint of $D_{2}$ and
Type 2, where the diagonals intersect at the midpoint of $D_{1}$; 

What about uniqueness ? If $Q$ is an mdq, then the ellipse inscribed in $Q$
which is tangent at the midpoint of two sides of $Q$ is not unique. Indeed
we prove (Lemma \ref{L1}) that in that case there are two such ellipses.
However, if $Q$\ is a trapezoid, then the ellipse inscribed in $Q$ which is
tangent at the midpoint of two sides of $Q$ is unique (Lemma \ref{L2}).

Is there a connection with tangency at the midpoint of sides of $Q$ and the
ellipse of maximal area inscribed in $Q$ as with parallelograms ?

For trapezoids, we prove(Lemma \ref{L2}) that the unique ellipse of maximal
area inscribed in $Q$ is the unique ellipse tangent to $Q$ at the midpoint
of two sides of $Q$. However, for mdq's, the unique ellipse of maximal area
inscribed in $Q$ need not be tangent at the midpoint of any side of $Q$.

We use the notation $Q(A_{1},A_{2},A_{3},A_{4})$ to denote the quadrilateral
with vertices $A_{1},A_{2},A_{3}$, and $A_{4}$, starting with $A_{1}=$ lower
left corner and going clockwise. Denote the sides of $%
Q(A_{1},A_{2},A_{3},A_{4})$ by $S_{1},S_{2},S_{3}$, and $S_{4}$, going
clockwise and starting with the leftmost side, $S_{1}$; Given a convex
quadrilateral, $Q=Q(A_{1},A_{2},A_{3},A_{4})$, which is \textbf{not} a
parallelogram, it will simplify our work below to consider quadrilaterals
with a special set of vertices. In particular, there is an affine
transformation which sends $A_{1},A_{2}$, and $A_{4}$ to the points $%
(0,0),(0,1)$, and $(1,0)$, respectively. It then follows that $A_{3}=(s,t)$
for some $s,t>0$; Summarizing:

\begin{gather}
Q_{s,t}=Q(A_{1},A_{2},A_{3},A_{4}),  \label{Qst} \\
A_{1}=(0,0),A_{2}=(0,1),A_{3}=(s,t),A_{4}=(1,0)\text{.}
\end{gather}%
Since $Q_{s,t}$ is convex, $s+t>1$; Also, if $Q$ has a pair of parallel
vertical sides, first rotate counterclockwise by $90^{\circ }$, yielding a
quadrilateral with parallel horizontal sides. Since we are assuming that $Q$
is not a parallelogram, we may then also assume that $Q_{s,t}$ does not have
parallel vertical sides and thus $s\neq 1$. Note that any trapezoid which is
not a parallelogram may be mapped, by an affine transformation, to the
quadrilateral $Q_{s,1}$; Thus we may assume that $(s,t)\in G$, where 
\begin{equation}
G=\left\{ (s,t):s,t>0,s+t>1,s\neq 1\right\} \text{.}  \label{G}
\end{equation}%
The following result gives the points of tangency of any ellipse inscribed
in $Q_{s,t}$(see \cite{H2}, where some details were provided). We leave the
details of a proof to the reader. Those details will also be provided in a
forthcoming book \cite{H3}.

\begin{proposition}
\label{P1}(i) $E_{0}$ is an ellipse inscribed in $Q_{s,t}$\ if and only if
the general equation of $E_{0}$ is given by 
\begin{gather}
t^{2}x^{2}+{\large (}4q^{2}(t-1)t+2qt(s-t+2)-2st{\large )}xy+  \label{elleq}
\\
{\large (}(1-q)s+qt{\large )}^{2}y^{2}-2qt^{2}x-2qt{\large (}(1-q)s+qt%
{\large )}y+q^{2}t^{2}=0  \notag
\end{gather}%
for some $q\in J=(0,1)$. Furthermore, (\ref{elleq}) provides a one--to--one
correspondence between ellipses inscribed in $Q_{s,t}$ and points $q\in J$.

(ii) If $E_{0}$ is an ellipse given by (\ref{elleq}) for some $q\in J$, then 
$E_{0}$ is tangent to the four sides of $Q_{s,t}$\ at the points

$P_{1}=\left( 0,\dfrac{qt}{(t-s)q+s}\right) \in S_{1},$

$P_{2}=\left( \dfrac{(1-q)s^{2}}{(t-1)(s+t)q+s},\dfrac{t{\large (}s+q(t-1)%
{\large )}}{(t-1)(s+t)q+s}\right) \in S_{2},$

$P_{3}=\left( \dfrac{s+q(t-1)}{(s+t-2)q+1},\dfrac{(1-q)t}{(s+t-2)q+1}\right)
\in S_{3}$, and $P_{4}=(q,0)\in S_{4}$.
\end{proposition}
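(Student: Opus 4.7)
The plan is to start with a general conic $Ax^2 + Bxy + Cy^2 + Dx + Ey + F = 0$ and impose tangency to the four supporting lines $L_1: x=0$, $L_4: y=0$, $L_2: (1-t)x + sy = s$, and $L_3: tx + (1-s)y = t$ of the sides of $Q_{s,t}$. Since a conic has five projective parameters and tangency to four lines imposes four conditions, one expects a one-parameter family, naturally parametrized by the $x$-coordinate $q$ of the tangent point on $S_4$.

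First I would use the tangencies to the two coordinate axes, which are easy: restricting the conic to $y=0$ gives $Ax^2 + Dx + F = 0$, a perfect square iff $D^2 = 4AF$, with double root at $x = -D/(2A)$; restricting to $x=0$ gives $Cy^2 + Ey + F = 0$, a perfect square iff $E^2 = 4CF$, with double root at $y = -E/(2C)$. Normalizing $A = t^2$ and setting $q = -D/(2A)$ yields $D = -2qt^2$ and $F = q^2 t^2$, which already agree with (\ref{elleq}). The relation $E^2 = 4CF$ then forces $E = -2qt\sqrt{C}$, where the sign is chosen so that $P_1$ has positive second coordinate. This will match the claimed $E$ precisely when $C = ((1-q)s+qt)^2$; establishing this identity together with the stated value of $B$ is the remaining task.

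Next, I would impose tangency to $L_2$ and $L_3$. Writing $L_2$ as $y = 1 + (t-1)x/s$ and substituting into the conic produces a quadratic in $x$ whose discriminant must vanish; this yields one polynomial equation in $B$ and $C$ (with $E$ already expressed in terms of $C$ and $q$). Repeating for $L_3$ yields a second equation. Solving the resulting $2 \times 2$ polynomial system, I expect to recover exactly the values of $B$ and $C$ asserted in (\ref{elleq}); the sign convention fixed above selects the correct branch. This algebraic step is the main obstacle: the computations are entirely mechanical but lengthy, and some care is needed to rule out the spurious roots, which correspond to conics tangent to the four lines on the wrong side of $Q_{s,t}$ (i.e., inscribed in a different cell of the line arrangement).

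Part (ii) then follows by the same principle used on the axes: for each side $L_i$, the tangent point is the double root of the restriction of the conic to a rational parametrization of $L_i$. Substituting the parametrizations of $L_2$ and $L_3$ into (\ref{elleq}) and simplifying yields $P_2$ and $P_3$. For the one-to-one correspondence with $q \in J$, I would verify that for every $q \in (0,1)$ the discriminant $B^2 - 4AC$ is negative (so the conic is a nondegenerate ellipse) and that each $P_i$ lies strictly inside its side $S_i$ rather than on an extension; conversely, any ellipse inscribed in $Q_{s,t}$ meets $S_4$ in a unique tangency point $(q,0)$ with $q \in (0,1)$, and by the derivation above that $q$ uniquely determines the equation.
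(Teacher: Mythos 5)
Your strategy is the standard and correct one, and it is worth noting that the paper itself offers no proof of this proposition (it defers to \cite{H2} and \cite{H3}), so there is no argument of record to diverge from. The parts you actually carry out are right: the lines $L_2:(1-t)x+sy=s$ and $L_3:tx+(1-s)y=t$ are correct; the normalization $A=t^2$, $D=-2qt^2$, $F=q^2t^2$ from tangency to $y=0$ at $(q,0)$ matches \eqref{elleq}; and $E=-2qt\sqrt{C}$ together with $C=((1-q)s+qt)^2$ reproduces both the stated $E$ and the point $P_1$, since $-E/(2C)=qt/((t-s)q+s)$. The dimension count (five projective parameters, four tangency conditions) correctly predicts the one-parameter family.

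The genuine gap is that the one substantive step — extracting $B$ and $C$ from the vanishing of the two discriminants coming from $L_2$ and $L_3$ — is announced ("I expect to recover...") rather than performed, and the accompanying issue you flag yourself, that this $2\times 2$ polynomial system has extraneous solution branches (conics tangent to the four lines but inscribed in a different cell of the arrangement, or with $E=+2qt\sqrt{C}$), is not resolved. As written this is a proof plan, not a proof. Two ways to close it cleanly: (a) invert the logic — since you already know conics tangent to the four lines with prescribed tangency point on $y=0$ are essentially unique within the inscribed family, it suffices to \emph{verify} that the displayed equation \eqref{elleq} restricts to a perfect square on each of the four lines (this is a pure check, no solving, and simultaneously yields the double roots $P_1,\dots,P_4$ for part (ii); e.g. on $L_3$ with $s=2$, $t=1$ the restriction is $((1+q)y-(1-q))^2$); or (b) work in the dual plane, where the conics tangent to all four lines form a pencil spanned by two degenerate point-pair dual conics of the complete quadrilateral — this linearizes the problem and makes the branch selection and the bijection with $q\in(0,1)$ transparent. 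Either route also supplies what your last paragraph still needs for the one-to-one correspondence, namely an argument that every inscribed ellipse arises for some $q\in(0,1)$ and that distinct $q$ give distinct ellipses.
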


\begin{remark}
Using Proposition \ref{P1}, it is easy to show that one can always find an
ellipse inscribed in a quadrilateral, $Q$, which is tangent to $Q$\ at the
midpoint of at least \textbf{one} side of $Q$, and this can be done for any
given side of $Q$.
\end{remark}

For the rest of the paper we work with the quadrilateral $Q_{s,t}$ defined
above. The following lemma gives necessary and sufficient conditions for $%
Q_{s,t}$ to be an mdq.

\begin{lemma}
\label{Qstmdq}(i) $Q_{s,t}$\ is a type 1 midpoint diagonal quadrilateral if
and only if $s=t$.

(ii) $Q_{s,t}$\ is a type 2 midpoint diagonal quadrilateral if and only if\ $%
s+t=2$.
\end{lemma}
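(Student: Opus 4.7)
The plan is essentially a direct coordinate computation using the explicit vertices of $Q_{s,t}$. I would first identify the two diagonals: $D_1=A_1A_3$ running from $(0,0)$ to $(s,t)$, and $D_2=A_2A_4$ running from $(0,1)$ to $(1,0)$. Their midpoints are then $M_1=(s/2,t/2)$ and $M_2=(1/2,1/2)$, respectively.

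Next I would compute the intersection point $P$ of the two diagonals by parametrization. Writing $D_1$ as $(us,ut)$ for $u\in[0,1]$ and $D_2$ as $(v,1-v)$ for $v\in[0,1]$, equating coordinates gives $us=v$ and $ut=1-v$, so $u(s+t)=1$. Since $s+t>1>0$ by the convexity constraint on $G$, this yields
\begin{equation*}
P=\left(\frac{s}{s+t},\frac{t}{s+t}\right).
\end{equation*}

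Part (i) then follows by setting $P=M_2$: the equation $s/(s+t)=1/2$ (equivalently $t/(s+t)=1/2$) simplifies immediately to $s=t$, and conversely $s=t$ forces $P=(1/2,1/2)=M_2$. Part (ii) follows by setting $P=M_1$: the equation $s/(s+t)=s/2$ requires $s+t=2$ (noting $s>0$), and again the converse is immediate.

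There is no substantive obstacle here; the whole argument is a short coordinate check once the intersection point $P$ has been written down in closed form. The only point requiring a moment of care is ensuring that $s+t\neq 0$ so that the parameter $u=1/(s+t)$ is well defined, which is guaranteed by $(s,t)\in G$.
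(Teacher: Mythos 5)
Your proposal is correct and matches the paper's proof essentially line for line: both compute the intersection point $P=\left(\frac{s}{s+t},\frac{t}{s+t}\right)$ of the diagonals, the midpoints $M_1=\left(\frac{s}{2},\frac{t}{2}\right)$ and $M_2=\left(\frac{1}{2},\frac{1}{2}\right)$, and then read off the conditions $P=M_2\iff s=t$ and $P=M_1\iff s+t=2$. The only cosmetic difference is that you find $P$ by parametrizing the segments while the paper intersects the line equations $y=\frac{t}{s}x$ and $y=1-x$.
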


\begin{proof}
The diagonals of $Q_{s,t}$ are $D_{1}$: $y=\dfrac{t}{s}x$ and $D_{2}$: $%
y=1-x $, and they intersect at the point $P=\left( \dfrac{s}{s+t},\dfrac{t}{%
s+t}\right) $; The midpoints of $D_{1}$ and $D_{2}$\ are $M_{1}=\left( 
\dfrac{s}{2},\dfrac{t}{2}\right) $ and $M_{2}=\left( \dfrac{1}{2},\dfrac{1}{2%
}\right) $, respectively. Now $M_{2}=P\iff \dfrac{s}{s+t}=\dfrac{1}{2}$ and $%
\dfrac{t}{s+t}=\dfrac{1}{2}$, both of which hold if and only if $s=t$; That
proves (i); $M_{1}=P\iff \dfrac{s}{s+t}=\dfrac{1}{2}s$ and $\dfrac{t}{s+t}=%
\dfrac{1}{2}t$, both of which hold if and only if $s+t=2$. That proves (ii).
\end{proof}

The following lemma shows that affine transformations preserve the class of
mdq's. We leave the details of the proof to the reader.

\begin{lemma}
\label{Tmdq}Let $T:R^{2}\rightarrow R^{2}$ be an affine transformation and
let $Q$ be a midpoint diagonal quadrilateral. Then $Q^{\prime }=T(Q)$ is
also a midpoint diagonal quadrilateral.
\end{lemma}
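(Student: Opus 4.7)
The plan is to use two elementary properties of affine maps: they preserve midpoints of segments (since they preserve ratios of signed lengths on a line), and they preserve intersection points of lines (since they preserve collinearity and are bijections when nondegenerate). Together these make the mdq condition transparently invariant.

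First I would fix notation: write $T(x) = Lx + b$ with $L$ an invertible $2\times 2$ matrix (this invertibility is implicit, since we need $Q' = T(Q)$ to again be a convex quadrilateral). Label the vertices $A_1, A_2, A_3, A_4$ of $Q$ and set $A_i' = T(A_i)$, so that $Q' = Q(A_1', A_2', A_3', A_4')$. The diagonals of $Q$ are $D_1 = \overline{A_1 A_3}$ and $D_2 = \overline{A_2 A_4}$, with midpoints $M_1, M_2$; the diagonals of $Q'$ are $D_1' = \overline{A_1' A_3'}$ and $D_2' = \overline{A_2' A_4'}$, with midpoints $M_1', M_2'$.

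Next I would note that $T$ sends $D_i$ to $D_i'$ as a set, and since $T$ is affine,
\begin{equation*}
T(M_i) = T\!\left(\tfrac{1}{2}(A_i^{\,} + A_{i+2}^{\,})\right) = \tfrac{1}{2}(A_i' + A_{i+2}') = M_i'
\end{equation*}
for $i=1,2$ (indices mod $4$). Moreover, if $P = D_1 \cap D_2$ then $T(P) \in T(D_1) \cap T(D_2) = D_1' \cap D_2'$; because $T$ is a bijection and the image diagonals still meet in a unique point (they cross at the interior of $Q'$), this image point is exactly $P' := D_1' \cap D_2'$.

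Finally I would conclude: if $Q$ is a Type 1 mdq, i.e.\ $P = M_2$, then $P' = T(P) = T(M_2) = M_2'$, so $Q'$ is a Type 1 mdq. Similarly, $P = M_1$ forces $P' = M_1'$, giving a Type 2 mdq. Hence $Q'$ is an mdq of the same type, completing the proof. No step is really an obstacle here; the only mild subtlety is justifying that $T(P)$ is genuinely the intersection of the image diagonals (rather than lying on some degenerate image), which is immediate from the invertibility of $T$ and the fact that $Q'$ is a convex quadrilateral.
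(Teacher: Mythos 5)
Your proof is correct and is exactly the argument the paper has in mind (the paper explicitly leaves these details to the reader): affine maps preserve midpoints because they respect affine combinations, and an invertible affine map carries the intersection of the diagonals to the intersection of the image diagonals, so the mdq condition (and even its type) is preserved. No gaps.
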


The following result shows that among non--parallelograms, the only
quadrilaterals which have inscribed ellipses tangent at the midpoint of two
sides are the mdq's.

\begin{lemma}
\label{L1}Let $Q$\ be a convex quadrilateral in the $xy$ plane which is 
\textbf{not }a trapezoid.

(i) There is an ellipse inscribed in $Q$ which is tangent at the midpoint of
two sides of $Q$ if and only if $Q$ is a midpoint diagonal quadrilateral, in
which case there are two such ellipses.

(ii) There is no ellipse inscribed in $Q$ which is tangent at the midpoint
of three sides of $Q$.
\end{lemma}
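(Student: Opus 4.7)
The plan is to reduce everything to $Q_{s,t}$ via affine equivalence and then read off the answer from Proposition~\ref{P1}. Since affine maps preserve midpoints, incidence, and ellipses, and since Lemma~\ref{Tmdq} shows they also preserve the mdq property, I may assume $Q=Q_{s,t}$ with $(s,t)\in G$. The hypothesis that $Q$ is not a trapezoid gives $s\neq 1$ (already in $G$) and also $t\neq 1$, since $t=1$ would make $S_{2}$ and $S_{4}$ both horizontal.

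The core step is: for each side $S_{i}$, use the formula for $P_{i}$ in Proposition~\ref{P1}(ii) to solve for the unique value $q_{i}\in J$ for which $P_{i}$ equals the midpoint of $S_{i}$. A direct calculation gives
\[
q_{1}=\frac{s}{s+t},\qquad q_{2}=\frac{s}{s(1+t)+t(t-1)},\qquad q_{3}=\frac{1}{s+t},\qquad q_{4}=\frac{1}{2}.
\]
By the one-to-one correspondence in Proposition~\ref{P1}(i), an inscribed ellipse is tangent at the midpoints of both $S_{i}$ and $S_{j}$ if and only if $q_{i}=q_{j}$, so the lemma reduces to deciding which of the $\binom{4}{2}=6$ equalities $q_{i}=q_{j}$ can hold in the domain $G\cap\{t\neq 1\}$.

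A routine case analysis then handles all six pairs. One finds $q_{1}=q_{4}\iff s=t$ and (using $s+t>1$) $q_{2}=q_{3}\iff s=t$; likewise $q_{1}=q_{2}\iff s+t=2$ and $q_{3}=q_{4}\iff s+t=2$. The remaining two pairs are ruled out by the no-trapezoid hypothesis: $q_{1}=q_{3}$ forces $s=1$, and $q_{2}=q_{4}$ forces $t=1$. In view of Lemma~\ref{Qstmdq}, this shows that a two-midpoint-tangent ellipse exists iff $Q_{s,t}$ is an mdq, and when it does exist the type~1 case ($s=t\neq 1$) produces two ellipses (with parameters $q_{1}=q_{4}=\tfrac{1}{2}$ and $q_{2}=q_{3}=\tfrac{1}{2s}$, distinct because $s\neq 1$) and the type~2 case ($s+t=2$, $s\neq 1$) also produces two ellipses (with parameters $q_{1}=q_{2}=\tfrac{s}{2}$ and $q_{3}=q_{4}=\tfrac{1}{2}$, distinct because $s\neq 1$). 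This proves (i).

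For (ii), a three-midpoint ellipse would require three of the $q_{i}$ to coincide; scanning the four nontrivial equalities above, any such triple combination forces both $s=t$ and $s+t=2$, hence $s=t=1$, which makes $Q_{s,t}$ the unit square (a parallelogram), excluded. So no such ellipse exists. The only real obstacle is the bookkeeping in solving the four midpoint equations for $q$; once those four $q_{i}$ are in hand the rest is elementary, and the exclusions of $s=1$ and $t=1$ line up exactly with the hypothesis that $Q$ is not a trapezoid.
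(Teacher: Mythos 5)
Your proof is correct and follows essentially the same route as the paper: reduce to $Q_{s,t}$ by affine invariance, solve each midpoint condition for the unique parameter $q_{i}\in J$, and compare the six pairwise equalities $q_{i}=q_{j}$ (your values of $q_{1},q_{2},q_{3},q_{4}$ and your pairwise conclusions match the paper's cases (a)--(f) exactly, and your triple analysis for (ii) is just a more explicit version of the paper's appeal to those cases). The only detail worth adding is the check that the relevant common values of $q$ (namely $\tfrac{1}{2}$ and $\tfrac{1}{2s}$ when $s=t$, and $\tfrac{s}{2}$ and $\tfrac{1}{2}$ when $s+t=2$) actually lie in $J=(0,1)$ --- immediate from $s+t>1$ and $s,t>0$, but needed for the existence half of (i).
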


\begin{proof}
By Lemma \ref{Tmdq} and standard properties of affine transformations, we
may assume that $Q=Q_{s,t}$, the quadrilateral given in (\ref{Qst}) with $%
(s,t)\in G$; The midpoints of the sides of $Q_{s,t}$ are given by $%
MP_{1}=\left( 0,\dfrac{1}{2}\right) \in S_{1}$, $MP_{2}=\left( \dfrac{s}{2},%
\dfrac{1+t}{2}\right) \in S_{2}$, $MP_{3}=\left( \dfrac{1+s}{2},\dfrac{t}{2}%
\right) \in S_{3}$, and $MP_{4}=\left( \dfrac{1}{2},0\right) \in S_{4}$; Now
let $E_{0}$ denote an ellipse inscribed in $Q_{s,t}$, and let $P_{j}\in
S_{j},j=1,2,3,4$ denote the points of tangency of $E_{0}$ with the sides of $%
Q_{s,t}$; By Proposition \ref{P1}(ii), 
\begin{gather}
P_{1}=MP_{1}\iff \dfrac{qt}{(t-s)q+s}=\dfrac{1}{2},  \label{32} \\
P_{2}=MP_{2}\iff \dfrac{(1-q)s}{(t-1)(s+t)q+s}=\dfrac{1}{2}\ \text{and}
\label{33a} \\
\dfrac{t{\large (}s+q(t-1){\large )}}{(t-1)(s+t)q+s}=\dfrac{1+t}{2}
\label{33b} \\
P_{3}=MP_{3}\iff \dfrac{s+q(t-1)}{(s+t-2)q+1}=\dfrac{1+s}{2}\text{ and}
\label{34a} \\
\dfrac{(1-q)t}{(s+t-2)q+1}=\dfrac{t}{2},  \label{34b} \\
P_{4}=MP_{4}\iff q=\dfrac{1}{2}\text{.}  \label{35}
\end{gather}%
Equations (\ref{32}) and (\ref{35}) each have the unique solutions $q_{1}=%
\dfrac{s}{s+t}$ and $q_{4}=\dfrac{1}{2}$, respectively. The system of
equations in (\ref{33a}) and (\ref{33b}) has unique solution $q_{2}=\dfrac{s%
}{t^{2}+st+s-t}$, and the system of equations in (\ref{34a}) and (\ref{34b})
has unique solution $q_{3}=\dfrac{1}{s+t}$; It is trivial that $%
q_{1},q_{3},q_{4}\in J=(0,1)$; Since $(s,t)\in G$, $t\left( s+t-1\right) >0$%
, which implies that $q_{2}\in J$. We now check which \textbf{pairs} of
midpoints of sides of $Q_{s,t}$ can be points of tangency of $E_{0}$; Note
that different values of $q$ yield distinct inscribed ellipses by the
one--to--one correspondence between ellipses inscribed in $Q_{s,t}$ and
points $q\in J$.

(a) $S_{1}$ and $S_{2}$: $q_{1}=q_{2}\iff \dfrac{s}{\allowbreak t\left(
s+t-1\right) +s}-\dfrac{s}{s+t}=$

$-\allowbreak \dfrac{st(s+t-2)}{\left( ts+t^{2}+s-t\right) \left( s+t\right) 
}=0\iff s+t=2$.

(b) $S_{1}$ and $S_{3}$: $q_{1}=q_{3}\iff \dfrac{1}{s+t}-\dfrac{s}{s+t}%
=\allowbreak -\dfrac{s-1}{s+t}=0$, which has no solution since $s\neq 1$.

(c) $S_{2}$ and $S_{3}$: $q_{2}=q_{3}\iff \dfrac{1}{s+t}-\dfrac{s}{%
\allowbreak t\left( s+t-1\right) +s}=\allowbreak \dfrac{(t-s)\left(
s+t-1\right) }{\left( ts+t^{2}+s-t\right) \left( s+t\right) }\allowbreak
=0\iff s=t$.

(d) $S_{1}$ and $S_{4}$: $q_{1}=q_{4}\iff \dfrac{1}{2}-\dfrac{s}{s+t}%
=\allowbreak -\dfrac{1}{2}\dfrac{s-t}{s+t}=0\iff s=t$.

(e) $S_{2}$ and $S_{4}$: $q_{2}=q_{4}\iff \dfrac{1}{2}-\dfrac{s}{\allowbreak
t\left( s+t-1\right) +s}=\allowbreak \dfrac{1}{2}\dfrac{(s+t)\left(
t-1\right) }{ts+t^{2}+s-t}=0$, which has no solution since $s+t\neq 0$ and $%
t\neq 1$.

(f) $S_{3}$ and $S_{4}$: $q_{3}=q_{4}\iff \dfrac{1}{2}-\dfrac{1}{s+t}%
=\allowbreak \dfrac{1}{2}\dfrac{s+t-2}{s+t}=0\iff s+t=2$.

That proves that there is an ellipse inscribed in $Q_{s,t}$ which is tangent
at the midpoints of $S_{1}$ and $S_{2}$ or at the midpoints of $S_{3}$ and $%
S_{4}$ if and only if $s+t=2$, and there is an ellipse inscribed in $Q_{s,t}$
which is tangent at the midpoints of $S_{2}$ and $S_{3}$ or at the midpoints
of $S_{1}$ and $S_{4}$ if and only if $s=t$; Furthermore, if $s\neq t$ or if 
$s+t\neq 2$, then there is no ellipse inscribed in $Q_{s,t}$ which is
tangent at the midpoint of two sides of $Q_{s,t}$. That proves (i) by Lemma %
\ref{Qstmdq}. To prove (ii), to have an ellipse inscribed in $Q_{s,t}$ which
is tangent at the midpoint of three sides of $Q_{s,t}$, those three sides
are either $S_{1}$, $S_{2}$, and $S_{3}$; $S_{1}$, $S_{2}$, and $S_{4}$; $%
S_{1}$, $S_{3}$, and $S_{4}$; or $S_{2}$, $S_{3}$, and $S_{4}$; By (a)--(f)
above, that is not possible.

For trapezoids inscribed in $Q$ we have the following result.
\end{proof}

\begin{lemma}
\label{L2}Assume that $Q$ is a trapezoid which is not a parallelogram. Then

(i) There is a unique ellipse inscribed in $Q$ which is tangent at the
midpoint of two sides of $Q$, and that ellipse is the unique ellipse of
maximal area inscribed in $Q$.

(ii) There is no ellipse inscribed in $Q$ which is tangent at the midpoint
of three sides of $Q$.
\end{lemma}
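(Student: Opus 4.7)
My first step is to pass to canonical form as in Lemma~\ref{L1}. Since affine maps preserve ellipses, midpoints, the property of being a trapezoid, and ratios of areas, I may assume $Q=Q_{s,1}$ with $s>0$ and $s\ne 1$; the parallel sides are then $S_2$ (lying on $y=1$) and $S_4$ (lying on $y=0$). Setting $t=1$ in equations (\ref{32})--(\ref{35}) and solving, the unique value of $q\in J$ making $P_j=MP_j$ is
\[
q_1=\frac{s}{s+1},\qquad q_2=\frac{1}{2},\qquad q_3=\frac{1}{s+1},\qquad q_4=\frac{1}{2}.
\]
Note that the paired coordinate equations (\ref{33a})--(\ref{33b}) and (\ref{34a})--(\ref{34b}) each collapse to a single condition when $t=1$, so $q_2$ and $q_4$ are still unambiguously defined.

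\textbf{Uniqueness in (i) and all of (ii).} By the one-to-one correspondence of Proposition~\ref{P1}, the inscribed ellipse is tangent at the midpoints of both $S_i$ and $S_j$ exactly when $q_i=q_j$. Since $q_2=q_4=\tfrac12$ identically, there is always an inscribed ellipse tangent at the midpoints of the two parallel sides. I would then check each of the other five pairs $\{i,j\}$: in every case the equality $q_i=q_j$ algebraically simplifies to $s=1$, which is excluded. This gives the uniqueness assertion in (i), and at the same time proves (ii), since three-fold midpoint tangency would require a second coincidence among the $q_j$'s in addition to $q_2=q_4$, and we have just ruled every such coincidence out.

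\textbf{The maximal area claim.} It remains to identify the $q=\tfrac12$ ellipse as the unique inscribed ellipse of maximal area in $Q_{s,1}$; affine invariance of the ratio of inscribed-ellipse area to quadrilateral area will then deliver the statement for the original trapezoid $Q$. My plan is to apply the standard closed-form formula for the area of a conic $Ax^2+Bxy+Cy^2+Dx+Ey+F=0$ (in terms of $4AC-B^2$ and the $3\times 3$ determinant of the associated symmetric matrix) to (\ref{elleq}) with $t=1$, obtaining the area as an explicit function of $q\in(0,1)$, and then to locate its maximum by calculus. The specialization is favorable: at $t=1$ the $xy$ coefficient in (\ref{elleq}) is linear in $q$ and the $y^2$ coefficient is the perfect square $\bigl((1-q)s+q\bigr)^2$, so I expect the area function to factor cleanly enough that $q=\tfrac12$ emerges as the unique critical point on $(0,1)$ and as a maximum.

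\textbf{Main obstacle.} The bookkeeping in the first two paragraphs is entirely mechanical once Lemma~\ref{L1}'s framework is specialized to $t=1$; the real obstacle is the area maximization. The conic-area formula is bulky, and a clean proof that $q=\tfrac12$ is the unique maximizer will hinge on exploiting the algebraic simplifications above (or alternatively an affine symmetry of $Q_{s,1}$ that fixes the midpoints of the parallel sides and induces the involution $q\mapsto 1-q$ on the parameter). Failing a slick argument, the maximization for trapezoids may already be available in \cite{H2} or the forthcoming \cite{H3}, in which case this step reduces to a citation.
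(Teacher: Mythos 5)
Your proposal follows essentially the same route as the paper: reduce to $Q_{s,1}$, observe that the tangency parameters satisfy $q_2=q_4=\tfrac{1}{2}$ while every other coincidence among the $q_j$ forces $s=1$, and then maximize the conic-area formula over $q\in(0,1)$. The computation you defer does collapse exactly as you anticipate: with $t=1$ the paper obtains $a^2b^2=\tfrac{s}{4}\,q(1-q)$, whose unique maximizer on $(0,1)$ is $q=\tfrac{1}{2}$.
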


\begin{proof}
Again, by affine invariance, we may assume that $Q=Q_{s,1}$, the
quadrilateral given in (\ref{Qst}) with $t=1$; Note that $0<s\neq 1$; Now
let $E_{0}$ denote an ellipse inscribed in $Q_{s,1}$. Letting $MP_{j}\in
S_{j},j=1,2,3,4$ denote the corresponding midpoints of the sides and using
Proposition \ref{P1}(ii) again, with $t=1$, we have 
\begin{equation}
P_{1}=MP_{1}\iff \dfrac{q}{(1-s)q+s}=\dfrac{1}{2}\text{,}  \label{36}
\end{equation}%
\begin{equation}
P_{2}=MP_{2}\iff (1-q)s=\dfrac{s}{2}\text{,}  \label{37}
\end{equation}%
\begin{eqnarray}
P_{3} &=&MP_{3}\iff \dfrac{s}{(s-1)q+1}=\dfrac{1+s}{2}\text{ and}
\label{38a} \\
&&\dfrac{1-q}{(s-1)q+1}=\dfrac{1}{2}\text{,}  \label{38b}
\end{eqnarray}%
\begin{equation}
P_{4}=MP_{4}\iff q=\dfrac{1}{2}\text{.}  \label{39}
\end{equation}%
The unique solution of the equations in (\ref{37}) and in (\ref{39}) is $q=%
\dfrac{1}{2}\in J$; The unique solution of the equation in (\ref{36}) is $q=%
\dfrac{s}{1+s}\in J$, and the unique solution of the system of equations in (%
\ref{38a}) and (\ref{38b}) is $q=\dfrac{1}{1+s}\in J$; We now check which 
\textbf{pairs} of midpoints of sides of $Q_{s,1}$ can be points of tangency
of $E_{0}$:

(a) $q=\dfrac{1}{2}$ gives tangency at the midpoints of $S_{2}$ and $S_{4}$.

(b) $S_{1}$ and $S_{2}$ or $S_{1}$ and $S_{4}$: $\dfrac{s}{1+s}=\dfrac{1}{2}%
\iff s=1$.

(c) $S_{3}$ and $S_{2}$ or $S_{3}$ and $S_{4}$: $\dfrac{1}{1+s}=\dfrac{1}{2}%
\iff s=1$.

(d) $S_{1}$ and $S_{3}$: $\dfrac{s}{1+s}=\dfrac{1}{1+s}\iff s=1$.

Since we have assumed that $s\neq 1$, the only way to have an ellipse
inscribed in $Q_{s,1}$ which is tangent at the midpoint of two sides of $%
Q_{s,1}$ is if those sides are $S_{2}$ and $S_{4}$ and $q=\dfrac{1}{2}$.
That proves that there is a unique ellipse inscribed in $Q_{s,1}$ which is
tangent at the midpoint of two sides of $Q_{s,1}$. Now suppose that $E_{0}$
is any ellipse with equation $Ax^{2}+Bxy+Cy^{2}+Dx+Ey+F=0$, and let $a$ and $%
b$ denote the lengths of the semi--major and semi--minor axes, respectively,
of $E_{0}$. Using the results in \cite{S}, it can be shown that $a^{2}b^{2}=%
\dfrac{4\delta ^{2}}{\Delta ^{3}}$, where $\Delta =4AC-B^{2}$ and $\delta
=CD^{2}+AE^{2}-BDE-F\Delta $; By Proposition \ref{P1}(i), then, with $t=1$
and after some simplification, we have $a^{2}b^{2}=f(q)=\dfrac{s}{4}q\left(
1-q\right) $; $q=\dfrac{1}{2}$ clearly maximizes $f(q)$ and thus gives the
ellipse of maximal area inscribed in $Q_{s,1}$. That proves the rest of (i).
(ii) now follows easily and we omit the details.
\end{proof}

\begin{remark}
It can be shown (see \cite{H4}) that if $Q$\ is a trapezoid which is not a
parallelogram, then $Q$\ cannot be an mdq. Thus the only quadrilaterals
Lemmas \ref{L1} and \ref{L2} have in common are parallelograms.
\end{remark}

Since a convex quadrilateral which is not a parallelogram either has no two
sides which are parallel, or is a trapezoid, the following theorem follows
immediately from Lemma \ref{L1}(ii) and Lemma \ref{L2}(ii).

\begin{theorem}
\label{T1}Suppose that $Q$\ is a convex quadrilateral which is not a
parallelogram. Then there is no ellipse inscribed in $Q$ which is tangent at
the midpoint of three sides of $Q$.
\end{theorem}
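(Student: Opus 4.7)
The plan is to reduce the theorem to the two lemmas already proved by doing a clean case split on the parallelism structure of $Q$. Since $Q$ is convex, every pair of opposite sides either meets (when extended) or is parallel, so the possibilities are: (a) $Q$ has no pair of parallel sides, or (b) $Q$ has at least one pair of parallel sides, i.e.\ $Q$ is a trapezoid. Because $Q$ is assumed not to be a parallelogram, in case (b) we are in the hypothesis of Lemma \ref{L2}.

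In case (a), $Q$ is by definition not a trapezoid, so Lemma \ref{L1}(ii) applies directly and rules out any inscribed ellipse tangent at the midpoint of three sides. In case (b), $Q$ is a trapezoid which is not a parallelogram, so Lemma \ref{L2}(ii) gives the same conclusion. Together these cases exhaust all convex non-parallelograms, and the theorem follows.

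The only nontrivial input is the dichotomy itself, but this is immediate from the definition of trapezoid used in the paper (at least one pair of parallel sides), together with the observation that ``not a parallelogram'' simply means ``at most one pair of parallel sides,'' so the two lemmas indeed cover every case with no overlap gap. I do not expect any computational obstacle at this stage; all of the work --- the classification of tangency points via Proposition \ref{P1}, the six pairwise comparisons in Lemma \ref{L1}, and the four comparisons in Lemma \ref{L2} --- has already been carried out. The theorem is the clean packaging of those two results.
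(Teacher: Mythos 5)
Your proposal is correct and is essentially identical to the paper's own argument: the paper likewise derives Theorem \ref{T1} immediately from Lemma \ref{L1}(ii) and Lemma \ref{L2}(ii) via the same dichotomy (no parallel sides versus trapezoid-but-not-parallelogram). Nothing is missing.
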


\section{Examples\textbf{\ }}

(1) Let $Q$ be the quadrilateral with vertices, $(0,0),(0,1),(2,4)$, and $%
(1,1)$; It follows easily that $Q$ is a type 1 midpoint diagonal
quadrilateral. The ellipse with equation $10\left( x-\dfrac{2}{3}\right)
^{2}-10\left( x-\dfrac{2}{3}\right) \left( y-\dfrac{4}{3}\right) +4\left( y-%
\dfrac{4}{3}\right) ^{2}=\dfrac{5}{3}$ is tangent to $Q$ at $\left( 0,\dfrac{%
1}{2}\right) $ and at $\left( \dfrac{1}{2},\dfrac{1}{2}\right) $, the
midpoints of $S_{1}$ and $S_{4}$, respectively. The ellipse with equation $%
54\left( x-\dfrac{4}{5}\right) ^{2}-54\left( x-\dfrac{4}{5}\right) \left( y-%
\dfrac{8}{5}\right) +16\left( y-\dfrac{8}{5}\right) ^{2}=\dfrac{27}{5}$ is
tangent to $Q$ at$\allowbreak \left( 1,\dfrac{5}{2}\right) $ and at $%
\allowbreak \left( \dfrac{3}{2},\dfrac{5}{2}\right) $, the midpoints of $%
S_{2}$ and $S_{3}$, respectively. One can show that neither of these
ellipses is the ellipse of maximal area inscribed in $Q$. 

(2) Let $Q$ be the trapezoid with vertices $(0,0),(0,1),(2,1)$, and $(1,1)$;
The ellipse with equation

$\left( x-\dfrac{5}{4}\right) ^{2}-3\left( x-\dfrac{5}{4}\right) \left( y-%
\dfrac{1}{2}\right) +\dfrac{25}{4}\left( y-\dfrac{1}{2}\right) ^{2}=1$ is
tangent to $Q$ at $\left( 2,1\right) $ and at $\left( \dfrac{1}{2},0\right) $%
, the midpoints of $S_{2}$ and $S_{4}$, respectively.

\end{document}